\documentclass[12pt]{article}
\usepackage[utf8]{inputenc}
\usepackage{graphicx}
\usepackage{amsmath}
\usepackage{fullpage}
\usepackage{mathtools}
\usepackage{commath}
\usepackage{csquotes}
\usepackage{amsfonts}
\usepackage{amssymb}
\usepackage{amsthm}
\newtheorem{thm}{Theorem}[section]
\newtheorem{lem}{Lemma}[section]

\newtheorem{dfn}{Definition}[section]

\newtheorem{prob}{Problem}[section]

\DeclareMathOperator{\MAX}{MAX}
\title{Eccentricity energy change of complete multipartite graphs due to edge deletion}
\author{Iswar Mahato \thanks{Department of Mathematics, Indian Institute of Technology Kharagpur, Kharagpur 721302, India. Email: iswarmahato02@gmail.com}\  \and M. Rajesh Kannan\thanks{Department of Mathematics, Indian Institute of Technology Kharagpur, Kharagpur 721302, India. Email: rajeshkannan@maths.iitkgp.ac.in, rajeshkannan1.m@gmail.com }}
\date{\today}
\begin{document}
\maketitle

\begin{abstract}
The eccentricity matrix $\varepsilon(G)$ of a graph $G$ is obtained from the distance matrix of $G$ by retaining the largest distances in each row and each column, and leaving zeros in the remaining ones. The eccentricity energy of $G$ is sum of the absolute values of the eigenvalues of $\varepsilon(G)$. Although the eccentricity matrices of graphs are closely related to the distance matrices of graphs, a number of properties of eccentricity matrices are substantially different from those of the distance matrices. The change in eccentricity energy of a graph due to an edge deletion is one such property. In this article, we give examples of graphs for which the eccentricity energy increase (resp., decrease) but the distance energy decrease (resp., increase) due to an edge deletion. Also, we prove that the eccentricity energy of the complete $k$-partite graph $K_{n_1,\hdots,n_k}$ with $k\geq 2$ and $ n_i\geq 2$,  increases due to an edge deletion.
\end{abstract}

{\bf AMS Subject Classification (2010):} 05C12, 05C50.

\textbf{Keywords.}  Complete multipartite graph, Eccentricity matrix, Eccentricity energy.

\section{Introduction}\label{sec1}
Throughout this paper,  we consider finite simple connected graphs. For a graph $G$,  let $V(G)$ and $E(G)$ denote the vertex set and the edge set of $G$, respectively. \textit{The adjacency matrix }of a graph $G$ on $n$ vertices, denoted by $A(G)$, is an $n \times n$ matrix whose rows and columns are indexed by the vertices of $G$ and the entries are defined as:  $A(G)=(a_{uv})$, where $a_{uv}=1$ if the vertices $u$ and $v$ are adjacent and $a_{uv}=0$ otherwise. The set of all eigenvalues of $A(G)$ is the spectrum of $G$. \textit{The energy (or the adjacency energy) of $G$} is defined as
$E_A(G)=\sum_{i=1}^n |\lambda_i|,$
where $\lambda_1,\lambda_2,\hdots,\lambda_n$ are the eigenvalues of $A(G)$. The \textit{distance} between the vertices $u$ and $v$ in $G$, denoted by  $d_G(u,v)$,  is the length of a shortest path between them.  The \textit{distance matrix} of a connected graph $G$ on $n$ vertices, denoted by $D(G)$, is an $n\times n$ matrix whose rows and columns are indexed by the vertices of $G$ and the entries are defined by $D(G)_{uv}=d_G(u,v)$. The eigenvalues of $D(G)$ are \emph{the distance eigenvalues} of $G$ and the largest eigenvalue of $D(G)$ is  \textit{the distance spectral radius} of $G$. The distance energy of $G$ is defined by
$E_D(G)=\sum_{i=1}^n |\mu_i|,$
where $\mu_1,\mu_2,\hdots,\mu_n$ are the distance eigenvalues of $G$.

The \textit{eccentricity} of a vertex $u\in V(G)$, denoted by  $e(u)$,  is defined by $e(u)=\max\{d_G(u,v):v\in V(G)\}$. \textit{The eccentricity matrix} $\varepsilon(G)$ of a graph $G$ is an $n\times n$ matrix indexed by the vertices of $G$ and the entries are defined as
$${\varepsilon(G)}_{uv}=
\begin{cases}
\text{$d_G(u,v)$} & \quad\text{if $d_G(u,v)=\min\{e(u),e(v)\}$,}\\
\text{0} & \quad\text{otherwise.}
\end{cases}$$
In \cite{ran1},  Randi\'{c}  introduced the notion of eccentricity matrix of a graph, then known as $D_{\MAX}$-matrix. It was renamed as eccentricity matrix by Wang et al. in \cite{ecc-main}. The eigenvalues of $\varepsilon(G)$ are \textit{the $\varepsilon$-eigenvalues} of $G$ and the set of all $\varepsilon$-eigenvalues of $G$ is the $\varepsilon$-spectrum of $G$. The largest eigenvalue of $\varepsilon(G)$ is   \textit{the $\varepsilon$-spectral radius} and is denoted by $\rho_{\varepsilon}(G)$. \textit{The eccentricity energy (or the $\varepsilon$-energy) }of a graph $G$ is defined \cite{wang2019graph} as
$E_{\varepsilon}(G)=\sum_{i=1}^n |\xi_i|,$
where $\xi_1,\xi_2,\hdots,\xi_n$ are the eigenvalues of $\varepsilon(G)$.

Recently, the eccentricity matrices stimulated a lot of interest and brought the attention of researchers. By looking at the definition, one may lead to think that the eccentricity matrices of graphs may behave like the distance matrices. But this is not true in general. For example, the eccentricity matrices of connected graphs could be reducible, while the distance matrices are always irreducible. The eccentricity matrix of a complete bipartite graph on $n$ vertices with maximum degree less than $n-1$ is reducible; while the eccentricity matrices of trees of order $n\geq 2$ are irreducible \cite{ecc-main}. Indeed, characterizing the classes of graphs with irreducible eccentricity matrices is an interesting and nontrivial problem considered in the literature \cite{ecc-main,wang2020spectral}. The distance matrices of trees are always invertible, but the eccentricity matrices of trees need not be invertible. Recently, Mahato et al., \cite{mahato2019spectral} characterized all the trees with invertible eccentricity matrices. For  recent works on the eccentricity matrix, we refer to \cite{mahato2020,wang2019graph,ecc-main,wang2020spectral}.

In recent years, the problem of energy change (for various matrices associated with graphs) due to an edge deletion considered by the researchers. The adjacency energy change of graphs due to edge deletion have been studied in \cite{akbari2009edge,day2008graph,shan2017energy,wang2015graph}. Recently, in  \cite{varghese2018distance},  Varghese et al.,  verified that the distance energy of complete bipartite graphs increase when an edge is removed and conjectured that the complete multipartite graphs have the same property. In  \cite{tian2020}, Tian et al., proved that the conjecture holds for the complete multipartite graph $K_{p,\dots, p}$ and the tripartite Turán graph. In \cite{sun611proof, sun611proof-cor}, Sun and Das solved the conjecture affirmatively. Motivated by the above-mentioned works, we study the problem that change in the eccentricity energy of graphs due to an edge removal.  Note that the eccentricity energy of a graph may increase, decrease or remain the same due to an edge deletion (See Table \ref{tab1}).  

\begin{figure}[h!]
	\centering
	\includegraphics[scale= 0.40]{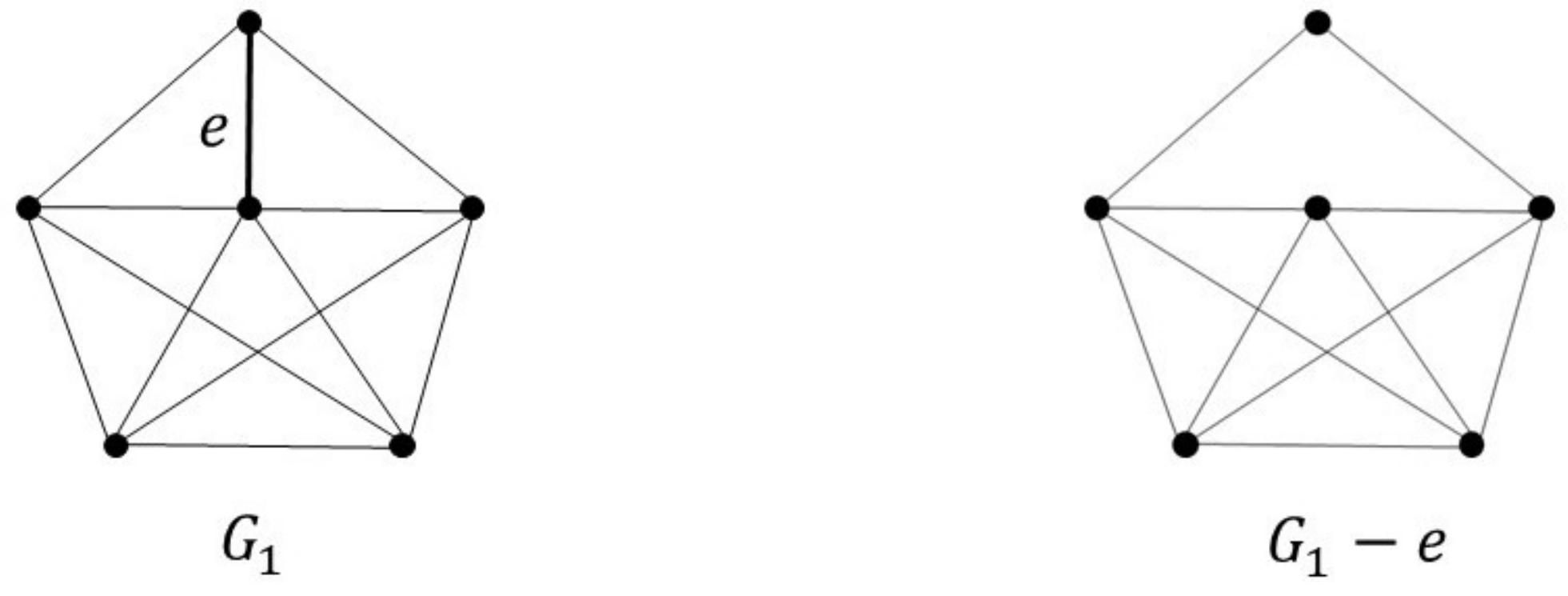}
	\caption{$G_1$ and $G_1-e$}
	\includegraphics[scale= 0.40]{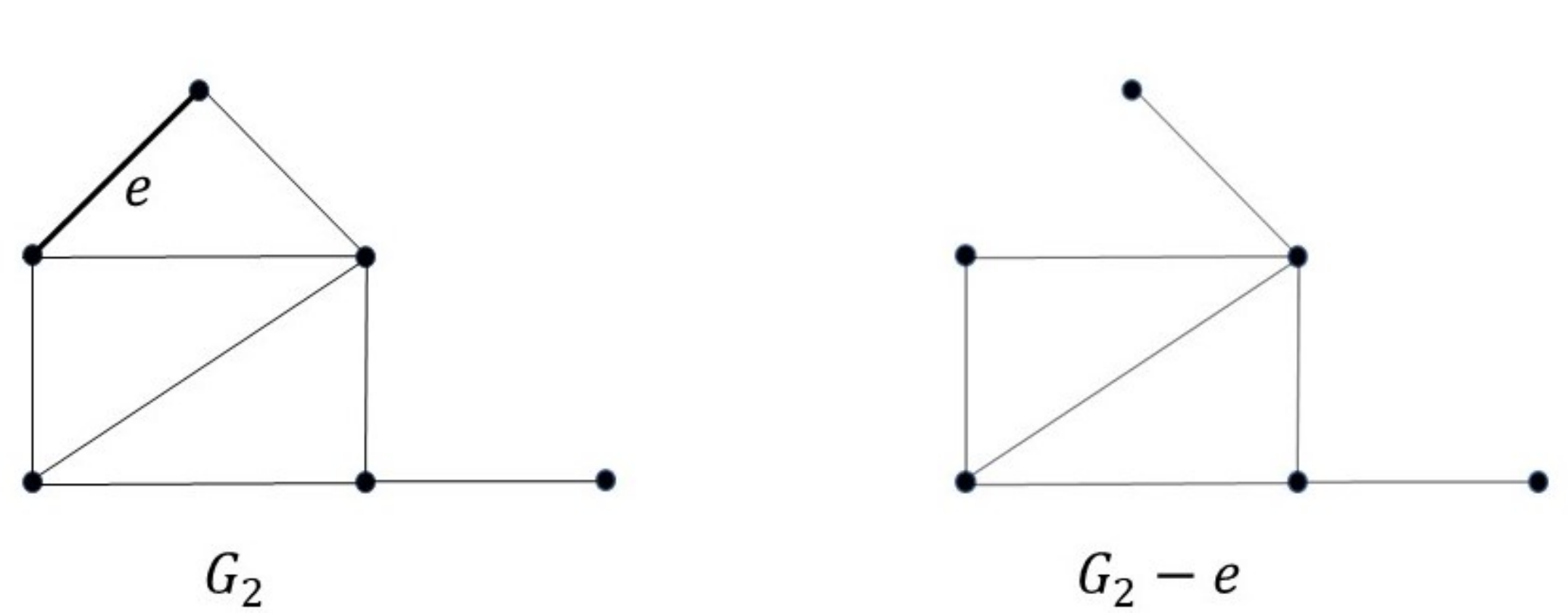}
	\caption{$G_2$ and $G_2-e$}
	\label{fig2}
\end{figure}

\begin{table}[h!]
	\centering
	\begin{tabular}{|c| c| c| c| c|}
		\hline
		Graphs & Eccentricity energy & Graphs & Eccentricity energy & Comparison\\
		\hline
		$K_6$ & = 10 & $K_6-e$ & $\approx 10.7446$ & $E_{\varepsilon}(K_6)< E_{\varepsilon}(K_6-e)$\\
		\hline
		$G_1$ & $\approx 12.2814$ & $G_1-e$ & $\approx 10.9282$ & $E_{\varepsilon}(G_1)> E_{\varepsilon}(G_1-e)$ \\
		\hline
		$G_2$ & $\approx 16.8327$ & $G_2-e$ & $\approx 16.8327$ & $E_{\varepsilon}(G_2)= E_{\varepsilon}(G_2-e)$\\
		\hline
	\end{tabular}
	\caption{Comparison of eccentricity energy change }
	\label{tab1}
\end{table}

The change of eccentricity energy of a graph due to an edge deletion is not similar to that of the distance energy change. The eccentricity energy of $G_3$ (resp., $G_4$) increases (resp., decreases), but the distance energy of $G_3$ decreases (resp., increases) when the edge $e$ is deleted (See, Table \ref{tab2}). The deletion of an edge from a connected graph with a unique positive distance eigenvalue always increases the distance energy provided that the resulting graph is  connected \cite{zhou2011distance}. But the deletion of an edge from a connected graph with exactly one positive $\varepsilon$-eigenvalue may not always increases the $\varepsilon$-energy. The graph $G_4$ in Figure \ref{fig4} has exactly one positive $\varepsilon$-eigenvalue, but $E_{\varepsilon}(G_4)=17.3808>16.9706=E_{\varepsilon}(G_4-e)$. This makes the problem more interesting. So we consider the following problem for the eccentricity energy of graphs:
\begin{prob}\label{prob1}
	Classify the graphs for which the eccentricity energy always increases or decreases due to an edge deletion.
\end{prob}
\begin{figure}[h!]
	\centering
	\includegraphics[scale= 0.40]{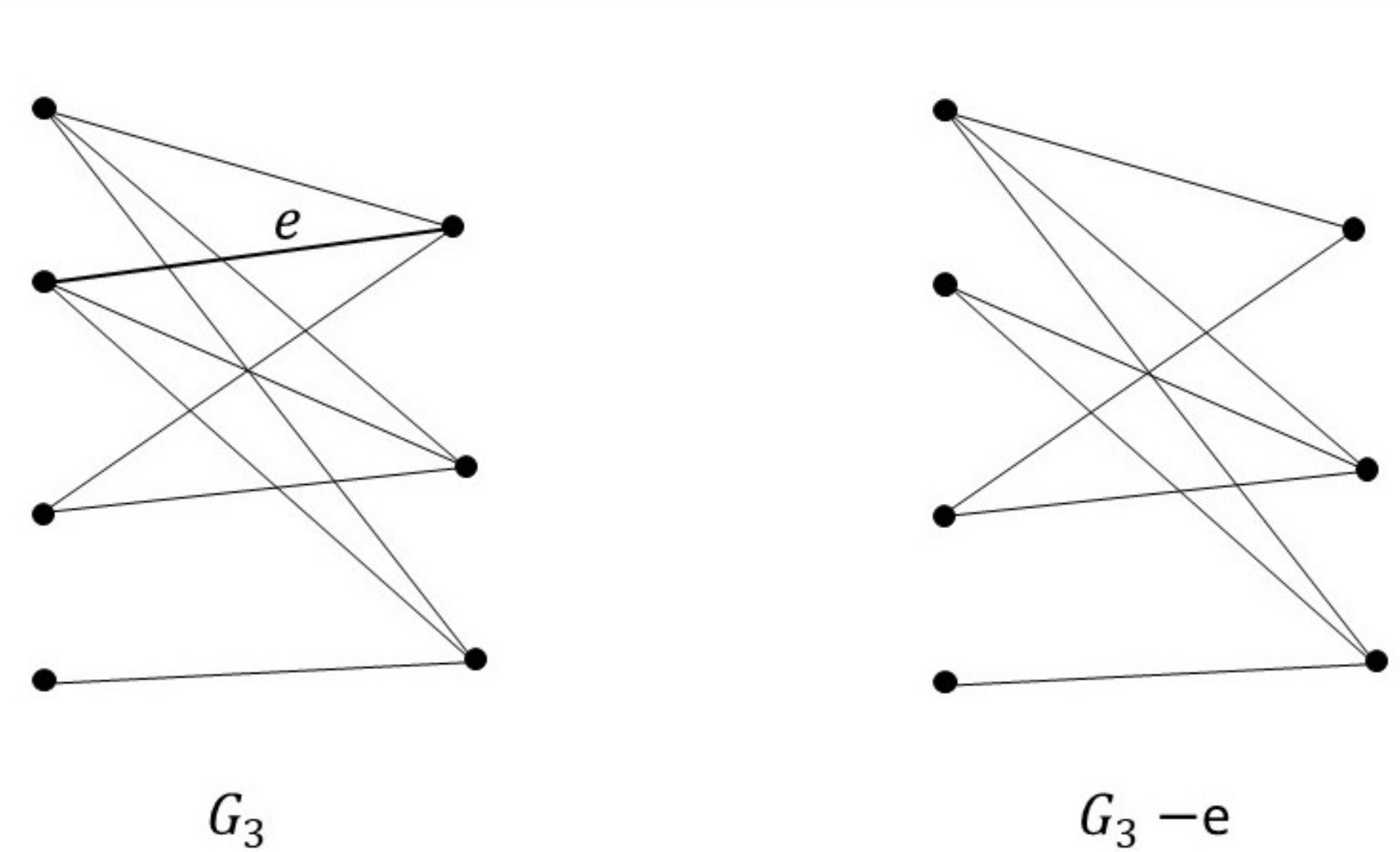}
	\caption{$G_3$ and $G_3-e$}
	\label{fig3}
	\includegraphics[scale= 0.40]{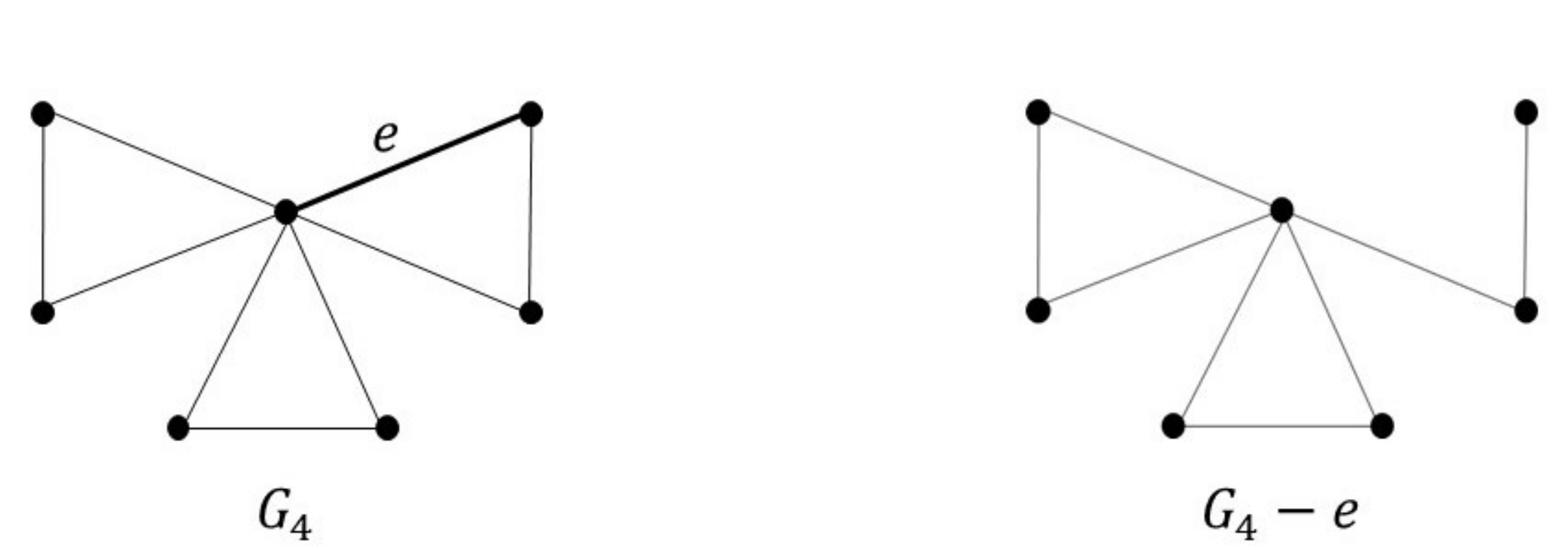}
	\caption{$G_4$ and $G_4-e$}
	\label{fig4}
\end{figure}
\begin{table}[h!]
	\centering
	\begin{tabular}{|p{1.2cm}|p{1.8cm}|p{1.8cm}|p{1.2cm}|p{1.8cm}|p{1.8cm}|p{2.1cm}|p{2.1cm}|}
		\hline
		Graphs & D-energy & $\varepsilon$-energy & Graphs &  D-energy & $\varepsilon$-energy & D-energy comparison & $\varepsilon$-energy comparison\\
		\hline
		$G_3$ &   $\approx 23.5415 $ & $\approx 22.6856$ &$G_3-e$ &  $\approx 23.4115 $ & $\approx 24.0922$ & $E_D(G_3)>E_D(G_3-e)$ & $E_{\varepsilon}(G_3)< E_{\varepsilon}(G_3-e)$ \\
		\hline
		$G_4$ & $\approx 19.2470$ & $\approx 17.3808$ & $G_4-e$  & $\approx 22.4508$ & $\approx 16.9706$ &$E_D(G_4)<E_D(G_4-e)$ &$E_{\varepsilon}(G_4)> E_{\varepsilon}(G_4-e)$ \\
		\hline
	\end{tabular}
	\caption{Comparison of distance energy and eccentricity energy change}
	\label{tab2}
\end{table}

In this article, we study the eccentricity energy change of complete $k$-partite graphs, and prove that the eccentricity energy of $K_{n_1,\hdots,n_k}$ with $k\geq 2$ and $ n_i\geq 2$ always increases due to  an edge deletion.

\section{Eccentricity energy change of complete $k$-partite graphs}\label{sec3}

 It is known that the $\varepsilon$-eigenvalues of $K_{n_1,n_2,\hdots,n_k}$ are $-2$ with multiplicity $(n_1+n_2+\hdots+n_k-k)$ and $2(n_i-1)$ for $i=1,2,\hdots,k$ \cite{mahato2020}. Thus $E_{\varepsilon}(K_{n_1,n_2,\hdots,n_k})=4(n_1+n_2+\hdots+n_k-k)$. But it is not easy to compute the $\varepsilon$-eigenvalues and the $\varepsilon$-energy of $K_{n_1,n_2,\hdots,n_k}-e$ explicitly. First, in Theorem \ref{main2}, we show that the  $\varepsilon$-energy of complete bipartite graphs increase due to an edge deletion. Using this result, we shall prove that $E_{\varepsilon}(K_{n_1,n_2,\hdots,n_k})<E_{\varepsilon}(K_{n_1,n_2,\hdots,n_k}-e)$ for any edge $e$.

To begin with, in the next lemma, we establish that $-2$ is an $\varepsilon$-eigenvalue of $K_{m,n}-e$ with multiplicity at least $(m+n-4)$ by constructing the corresponding  eigenvectors explicitly. Indeed, later we will see that the multiplicity of $-2$ equals to $(m+n-4).$
\begin{lem}\label{lem1}
Let $K_{m,n}$ be the complete bipartite graph with $m, n \geq 2$. Then for any edge $e$, $-2$ is an $\varepsilon$-eigenvalue of $K_{m,n}-e$ with multiplicity at least $m+n-4$. 
\end{lem}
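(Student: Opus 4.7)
The plan is to exhibit $m+n-4$ explicit linearly independent eigenvectors for the eigenvalue $-2$. First I would fix notation: let the two parts of $K_{m,n}$ be $X=\{x_1,\dots,x_m\}$ and $Y=\{y_1,\dots,y_n\}$, and by vertex-transitivity within each part assume without loss of generality that the deleted edge is $e=x_1y_1$. The next step would be a distance and eccentricity inventory in $K_{m,n}-e$: I would observe that all same-part distances remain $2$ and all distinct-part pairs other than $\{x_1,y_1\}$ are still edges, while for $x_1$ and $y_1$ the independence of $X$ and $Y$ rules out any length-$2$ path, and the walk $x_1-y_j-x_i-y_1$ (available for any $i,j\geq 2$ since $m,n\geq 2$) shows $d(x_1,y_1)=3$. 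Consequently $e(x_1)=e(y_1)=3$ while $e(v)=2$ for every other vertex.

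From this I would read off $\varepsilon(K_{m,n}-e)$ directly. The only nonzero entries are $\varepsilon_{x_1 y_1}=3$, together with $2$'s inside $X\setminus\{x_1\}$, inside $Y\setminus\{y_1\}$, coupling $x_1$ to $X\setminus\{x_1\}$, and coupling $y_1$ to $Y\setminus\{y_1\}$. Crucially, every entry between $X\setminus\{x_1\}$ and $Y\setminus\{y_1\}$ vanishes (the distance there is $1$ while the smaller eccentricity is $2$), and likewise for the entries linking $x_1$ to $Y\setminus\{y_1\}$ and $y_1$ to $X\setminus\{x_1\}$. Thus the two principal sub-blocks on $X\setminus\{x_1\}$ and $Y\setminus\{y_1\}$ are each $2(J-I)$, completely decoupled from one another and from the rows indexed by $x_1,y_1$.

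With this block structure in hand the construction is immediate. For any vector $v$ supported on $\{x_2,\dots,x_m\}$ whose coordinates sum to zero, extending by zeros elsewhere will give $\varepsilon v=-2v$: on the $X\setminus\{x_1\}$-block, $2(J-I)v=-2v$ since $Jv=0$; the row at $x_1$ contributes $2\,\mathbf{1}^{T}v=0$; and the $y$-rows annihilate $v$ because their entries on $X\setminus\{x_1\}$ are all zero. This yields an $(m-2)$-dimensional eigenspace for $-2$, and the symmetric construction on $Y\setminus\{y_1\}$ supplies another $(n-2)$-dimensional eigenspace. Since these two families have disjoint coordinate supports, together they furnish $m+n-4$ linearly independent eigenvectors. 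The only place requiring real care will be the distance computation $d(x_1,y_1)=3$ and the check that no other distance changes, since these facts are precisely what force the block decoupling; the rest is a direct verification.
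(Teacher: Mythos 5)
Your proof is correct and follows essentially the same route as the paper: both arguments exhibit the $-2$-eigenspace as zero-sum vectors supported on $X\setminus\{x_1\}$ and on $Y\setminus\{y_1\}$ (the paper writes down the explicit basis $e_2-e_i$, $e_{m+2}-e_j$ spanning exactly that space). Your version is somewhat more detailed, since you justify the block structure of $\varepsilon(K_{m,n}-e)$ via the distance computation $d(x_1,y_1)=3$, which the paper leaves as an unstated verification.
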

\begin{proof}
Let $V_1$ and $V_2$ be the  bipartition of the vertex set of $K_{m,n}$ with $V_1=\{v_1,\hdots, v_m\}$ and $V_2=\{v_{m+1},v_{m+2}, \hdots, v_{m+n}\}$. Let $e$ be any edge in $K_{m,n}$. Without loss of generality, we assume that $e=v_1v_{m+1}$. Let $e_k$ denote the $(m+n)$-vector with $k$-th entry $1$ and $0$ elsewhere. Let $X=\{e_2-e_i:i=3,4,\hdots,m\}\cup \{e_{m+2}-e_j:j=m+3,m+4,\hdots,m+n\}$. Then for any vector $x \in X$, we have $\varepsilon(K_{m,n}-e)x=-2x$. Since $|X|=m+n-4$, and all vectors in $X$ are linearly independent, $\varepsilon(K_{m,n}-e)$ has $-2$ as an eigenvalue  with multiplicity at least $m+n-4$.
\end{proof}

To find the remaining $\varepsilon$-eigenvalues of $K_{m,n}-e$, we  use the equitable partition technique.

\begin{dfn}[\cite{brouwer2011spectra}]
	(Equitable partition) Let $A$ be a real symmetric matrix whose rows and columns are indexed by $X=\{1,2,\hdots,n\}$. Let $\pi=\{X_1,X_2,\hdots,X_m\}$ be a partition of $X$. The characteristic matrix $C$ is an $n\times m$ matrix whose $j$-th column is the characteristic vector of the set $X_j$ $(j=1,2,\hdots,m)$. Let $A$ be partitioned conformally with $\pi$ as follows \[A=\left[ {\begin{array}{cccc}
			A_{11} & A_{12} &\hdots & A_{1m}\\
			A_{21} & A_{22} &\hdots & A_{2m}\\
			\vdots &\hdots & \ddots & \vdots\\
			A_{m1} & A_{m2}& \hdots &A_{mm}\\
	\end{array} } \right],\]
	where $A_{ij}$ denotes the submatrix (block) of $A$ formed by rows in $X_i$ and the columns in $X_j$. If $q_{ij}$ denote the average row sum of $A_{ij}$, then the matrix $Q=(q_{i,j})$ is called the quotient matrix of $A$. If the row sum of each block $A_{ij}$ is  constant, then the partition $\pi$ is called equitable partition.
\end{dfn}
Now we state a well-known result  about the spectrum of a quotient matrix corresponding to an equitable partition.

\begin{thm}[\cite{brouwer2011spectra}\label{quo-spec}]
	Let $Q$ be a quotient matrix of any square matrix $A$ corresponding to an equitable partition. Then the spectrum of $A$ contains the spectrum of $Q$.
\end{thm}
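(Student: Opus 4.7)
The plan is to transport every eigenpair of $Q$ to one of $A$ through the characteristic matrix $C$ of the partition $\pi$. The central identity driving the argument is the intertwining relation $AC = CQ$, which holds precisely because the partition $\pi$ is equitable; once that is in hand the theorem follows by purely formal linear algebra.

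First, I would verify $AC = CQ$ by a direct block computation. Fix a row $u$ with $u \in X_i$ and any column index $j \in \{1,\dots,m\}$. Since the $j$-th column of $C$ is the characteristic vector of $X_j$,
\[
(AC)_{uj} = \sum_{v \in X_j} A_{uv},
\]
which is precisely the $u$-th row sum of the block $A_{ij}$. By the equitable partition hypothesis this row sum equals the constant $q_{ij}$ independently of the choice of $u \in X_i$. On the other hand, $C_{uk} = \delta_{ik}$ for $u \in X_i$, so
\[
(CQ)_{uj} = \sum_{k=1}^{m} C_{uk}\, q_{kj} = q_{ij}.
\]
Hence the two matrices agree entrywise, giving $AC = CQ$.

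Next, I would push eigenvectors forward. If $v \in \mathbb{R}^m$ satisfies $Qv = \lambda v$, then
\[
A(Cv) = (AC)v = (CQ)v = \lambda (Cv),
\]
so $Cv$ is a $\lambda$-eigenvector of $A$ provided $Cv \neq 0$. This nonvanishing is where one has to be slightly careful: the columns of $C$ are characteristic vectors of nonempty pairwise disjoint subsets of $X$, hence linearly independent, so the linear map $v \mapsto Cv$ is injective and $Cv \neq 0$ whenever $v \neq 0$.

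Finally, to upgrade pointwise containment to containment of the full spectrum (with multiplicities), I would take a maximal linearly independent family $v_1,\dots,v_r$ of eigenvectors of $Q$ and note that injectivity of $C$ sends it to a linearly independent family $Cv_1,\dots,Cv_r$ of eigenvectors of $A$ with the same respective eigenvalues. The only real obstacle is establishing the intertwining identity $AC = CQ$; once that is set up, injectivity of $C$ is immediate and the remainder of the argument is mechanical.
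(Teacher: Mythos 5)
The paper does not prove this theorem at all --- it is quoted from \cite{brouwer2011spectra} as a known fact --- so there is no internal argument to compare against; your proof via the intertwining identity $AC=CQ$ and the injectivity of the characteristic matrix $C$ is the standard one and is correct for the containment of spectra that the paper actually uses. Your closing remark about multiplicities would additionally require observing that $Q$ is diagonalizable (e.g.\ because $Q=(C^{T}C)^{-1}C^{T}AC$ is similar to a symmetric matrix when $A$ is symmetric), but that refinement is not needed for the statement as used here.
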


Let us partition the vertex set of $K_{m,n}-e$ as follows: $\pi = V(K_{m,n}-e) = P_1 \cup P_2 \cup P_3 \cup P_4$, where $P_1 = \{v_1\}, P_2 = \{v_2, \dots, v_m\}, P_3 = \{u_1\}$ and $P_4 = \{u_2, \dots, u_n\}$. Note that $\pi$ is an equitable partition of $\varepsilon(K_{m,n}-e)$ and the corresponding quotient matrix $Q_\pi$ is given by
\[Q_\pi=\begin{bmatrix}
0 & 2(m-1) & 3 & 0\\
2 & 2(m-2) & 0 & 0 \\
3 & 0 & 0 & 2(n-1) \\
0 & 0 & 2 & 2(n-2)
\end{bmatrix}.\]
The characteristic polynomial of $Q_\pi$ is 
\begin{equation}\label{eqn1}
p(x) = x^4-2(m+n-4)x^3 + [4mn-12(m+n) + 15]x^2 + [16mn - 6(m+n) -40]x - 4[5mn-14(m+n)+32].
\end{equation}
The eigenvalues of $Q_\pi$ are the roots of the characteristic polynomial $p(x)$. Since $Q_\pi$ is the quotient matrix corresponding to the equitable partition $\pi$, by Lemma \ref{quo-spec}, we have every eigenvalue of $Q_\pi$ is an eigenvalue of $\varepsilon(K_{m,n}-e)$. Therefore, the roots of the polynomial $p(x)$ are in the  $\varepsilon$-spectrum of $K_{m,n}-e$. In the following lemmas, we analyze the roots of $p(x)$, and compare the eccentricity energies of $K_{m,n}$ and $K_{m,n}-e$ for  $m = 2, 3$.

\begin{lem}\label{lem2}
Let $n\geq 2$. If $e$ is any edge of $K_{2,n}$, then 
$E_{\varepsilon}(K_{2,n})<E_{\varepsilon}(K_{2,n}-e).$
\end{lem}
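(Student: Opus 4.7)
The plan is to determine the entire $\varepsilon$-spectrum of $K_{2,n}-e$ and then extract exactly the sign information needed to beat $E_{\varepsilon}(K_{2,n}) = 4n$. By Lemma \ref{lem1}, $-2$ is an $\varepsilon$-eigenvalue with multiplicity at least $n-2$; setting $m=2$ in \eqref{eqn1} gives
\[p(x) = x^4 - 2(n-2)x^3 - (4n+9)x^2 + 26(n-2)x + 16(n-1),\]
whose four roots are $\varepsilon$-eigenvalues of $K_{2,n}-e$ by the equitable partition. Since $(n-2) + 4 = n+2 = |V(K_{2,n}-e)|$, these exhaust the spectrum, and since $\varepsilon(K_{2,n}-e)$ is real symmetric all four roots of $p(x)$ are real.

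I would first pin down the sign pattern of the roots of $p(x)$. The nonzero-coefficient sign sequences of $p(x)$ and $p(-x)$ are $(+,-,-,+,+)$ and $(+,+,-,-,+)$ respectively (each reducing to $(+,-,+)$ at $n=2$), and each displays exactly two sign changes; so Descartes' rule allows at most two positive and at most two negative roots. Since $p(0) = 16(n-1) > 0$ rules out zero as a root and the four roots are real, there are exactly two positive roots $a_1, a_2$ and exactly two negative roots $b_1 < b_2 < 0$.

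The essential computation is $p(-4) = 48 - 24n$, which is $\leq 0$ for every $n \geq 2$ (with equality precisely at $n=2$). Combined with $p(0) > 0$ and $p(x) \to +\infty$ as $x \to -\infty$, a sign-change count along the negative axis forces $b_1 \leq -4$ and $-4 \leq b_2 < 0$; in particular $b_1 + b_2 < -4$ in every case (the boundary case $n=2$, where $p(x) = (x^2-1)(x^2-16)$ and $b_1+b_2 = -5$, is handled by direct factorisation). Combining with Vieta's relation $a_1 + a_2 + b_1 + b_2 = 2(n-2)$ then gives
\[\sum_{i=1}^{4}|\alpha_i| = (a_1+a_2) - (b_1+b_2) = 2(n-2) - 2(b_1+b_2) > 2(n-2) + 8 = 2n+4,\]
and adding the $2(n-2)$ contributed by the $n-2$ copies of $-2$ yields
\[E_{\varepsilon}(K_{2,n}-e) > 2(n-2) + 2n + 4 = 4n = E_{\varepsilon}(K_{2,n}),\]
as required.

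The main obstacle I anticipate is the sign analysis at $x = -4$: verifying that the borderline case $n = 2$ (where $-4$ is itself a root) still yields the strict inequality $b_1 + b_2 < -4$, and confirming that Descartes' rule together with the single evaluation $p(-4) \leq 0$ is enough to separate the two negative roots across $-4$. Everything else reduces to Vieta and the value of $E_{\varepsilon}(K_{2,n})$ recalled at the beginning of Section 2.
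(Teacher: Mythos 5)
Your proposal is correct and follows essentially the same route as the paper: identify the $\varepsilon$-spectrum of $K_{2,n}-e$ as $n-2$ copies of $-2$ together with the four roots of the quotient polynomial, show there are exactly two positive and two negative roots, place the two negative roots on either side of $-4$, and finish with Vieta's formula; your use of Descartes' rule (where the paper uses the signs of the elementary symmetric functions) and of the single evaluation $p(-4)=-24(n-2)\le 0$ (where the paper also evaluates at $-5$, $-1$ and $0$) are only cosmetic variations. The one loose end is your claim that the $n-2$ eigenvalues from Lemma \ref{lem1} and the four roots of $p$ ``exhaust the spectrum'' by a pure dimension count: that count is only conclusive if no root of $p$ equals $-2$, and your localization $-4\le b_2<0$ does not exclude $b_2=-2$. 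This is easily repaired, either by computing $p(-2)=-36(n-1)<0$ for $n\ge 2$, or by observing that the eigenvectors constructed in Lemma \ref{lem1} are orthogonal to the characteristic vectors of the equitable partition so the two contributions to the spectrum are independent; the paper instead closes this gap by pinning $\alpha_3\in(-1,0)$ and $\alpha_4\in(-5,-4)$, which shows directly that no $\alpha_i$ equals $-2$.
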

\begin{proof}
It is easy to see that $E_{\varepsilon}(K_{2,2})=8<10=E_{\varepsilon}(K_{2,2}-e)$ and   $E_{\varepsilon}(K_{2,3})=12<13.8486=E_{\varepsilon}(K_{2,3}-e)$.

Let $n\geq 4$ and  $\alpha_1\geq \alpha_2 \geq \alpha_3\geq \alpha_4$ be the roots of the equation
\begin{equation}\label{eqnfor2}
p(x)=x^4+(4-2n)x^3-(4n+9)x^2+26(n-2)x+16(n-1) =0.
\end{equation}
Since $\alpha_1, \alpha_2, \alpha_3, \alpha_4$ are the roots of the equation (\ref{eqnfor2}), we have
\begin{eqnarray*}
\alpha_1+\alpha_2+\alpha_3+\alpha_4 &=& 2n-4\\
\alpha_1 \alpha_2+\alpha_1\alpha_3+\alpha_1 \alpha_4+ \alpha_2\alpha_3+\alpha_2 \alpha_4+\alpha_3\alpha_4 &=& -(4n+9)\\
\alpha_1 \alpha_2\alpha_3+\alpha_1 \alpha_2\alpha_4+ \alpha_1\alpha_3 \alpha_4+\alpha_2\alpha_3\alpha_4 &=& -26(n-2)\\
\alpha_1 \alpha_2 \alpha_3 \alpha_4 &=& 16(n-1).
\end{eqnarray*}

Since $n\geq 4$, $\alpha_1 \alpha_2\alpha_3\alpha_4=16(n-1)>0$ and hence $p(x)$ has either $0,2$ or $4$ positive roots. As $n \geq 4$, $\alpha_1+\alpha_2+\alpha_3+\alpha_4 = 2n-4>0$, so all $\alpha_i$'s can not be negative. Again $\alpha_1 \alpha_2+\alpha_1\alpha_3+\alpha_1 \alpha_4+ \alpha_2\alpha_3+\alpha_2 \alpha_4+\alpha_3\alpha_4 =-(4n+9)<0$ implies that $\alpha_i$'s cannot be all positive. Thus $p(x)$ must have exactly two positive and two negative roots. Assume that $\alpha_4 \leq  \alpha_3<0<\alpha_2 \leq \alpha_1$.

Now $p(-5)=36n+144>0$ and $p(-4)=-24(n-2)<0$.  By the intermediate value theorem, we have $-5<\alpha_4<-4$. As $p(-1)=-12(n-2)<0$ and $p(0)=16(n-1)>0$, we have $-1<\alpha_3<0$. Consequently,
$$\alpha_3+\alpha_4<(-4)+0=-4.$$

Now it is clear that all $\alpha_i$s are different from $-2$,   so $E_{\varepsilon}(K_{2,n}-e)=2n-4+\abs{\alpha_1}+\abs{\alpha_2}+\abs{\alpha_3}+\abs{\alpha_4}$. Note that $E_{\varepsilon}(K_{2,n})=4(2+n-2)=4n$.

Therefore,
\begin{eqnarray*}
E_{\varepsilon}(K_{2,n}-e) &=&2n-4+\abs{\alpha_1}+\abs{\alpha_2}+\abs{\alpha_3}+\abs{\alpha_4}\\
&=& 2n-4+2n-4-2(\alpha_3+\alpha_4)\\
&=& 4n-8-2(\alpha_3+\alpha_4)\\
&>& 4n=E_{\varepsilon}(K_{2,n}).
\end{eqnarray*}
This completes the proof.
\end{proof}

\begin{lem}\label{lem3}
Let  $n\geq 3$. If $e$ is any edge  in $K_{3,n}$, then
$E_{\varepsilon}(K_{3,n})<E_{\varepsilon}(K_{3,n}-e).$
\end{lem}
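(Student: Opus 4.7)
The plan is to adapt the strategy of Lemma \ref{lem2}, but to replace its two-sided case analysis with a short uniform Vieta argument. Substituting $m=3$ into (\ref{eqn1}) gives
\[
p(x) \;=\; x^4 - 2(n-1)x^3 - 21x^2 + (42n-58)x + 4(10-n),
\]
and a direct evaluation shows $p(-2) = 72(1-n) \neq 0$ for $n \geq 3$. Since $E_\varepsilon(K_{3,n}) = 4(n+1)$ and the matrix $\varepsilon(K_{3,n}-e)$ is real symmetric of order $n+3$, Lemma \ref{lem1} and Theorem \ref{quo-spec} together force the $\varepsilon$-spectrum of $K_{3,n}-e$ to consist of $-2$ with multiplicity exactly $n-1$ plus the four (real) roots $\alpha_1 \geq \alpha_2 \geq \alpha_3 \geq \alpha_4$ of $p$. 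The lemma therefore reduces to proving
\[
\sum_{i=1}^{4}|\alpha_i| \;>\; 2n+6.
\]

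The central estimate is $p(-4) = 64 - 44n < 0$ for every $n \geq 3$. Combined with $p(x) \to +\infty$ as $x \to -\infty$, the intermediate value theorem produces a root of $p$ strictly less than $-4$, which is necessarily the smallest root, so $\alpha_4 < -4$.

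Let $S$ denote the sum of the negative roots of $p$. Since $\alpha_4 < 0$, the term $\alpha_4$ appears in $S$, and any additional negative roots contribute nonpositive amounts, so $S \leq \alpha_4 < -4$. Using Vieta's formula $\sum_{i=1}^{4}\alpha_i = 2(n-1)$, we obtain
\[
\sum_{i=1}^{4}|\alpha_i| \;=\; \Bigl(\sum_{i=1}^{4}\alpha_i\Bigr) - 2S \;=\; 2(n-1) - 2S \;>\; 2(n-1)+8 \;=\; 2n+6,
\]
as desired.

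The only obstacle is routine: the two numerical checks $p(-4)<0$ and $p(-2)\neq 0$ for $n \geq 3$. In contrast to Lemma \ref{lem2}, no case split on the sign of $p(0)$ (which flips at $n=10$) is needed, because the bound $S \leq \alpha_4 < -4$ bypasses any need to locate the remaining negative roots precisely.
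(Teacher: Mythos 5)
Your proof is correct, and all the computations check out: with $m=3$ in (\ref{eqn1}) one indeed gets $p(-2)=72(1-n)\neq 0$ and $p(-4)=64-44n<0$ for $n\geq 3$, the roots of $p$ are real because they are eigenvalues of the real symmetric matrix $\varepsilon(K_{3,n}-e)$, and the identity $\sum_i|\alpha_i|=\sum_i\alpha_i-2S$ together with $S\leq\alpha_4<-4$ gives exactly the bound $\sum_i|\alpha_i|>2n+6$ that is needed. The overall framework is the same as the paper's (quotient matrix, Vieta, and the key evaluation $p(-4)<0$), but your treatment of the negative roots is genuinely cleaner. The paper splits into three cases according to the sign of the constant term $4(10-n)$: for $n<10$ it argues there are exactly two negative roots and must additionally locate $\alpha_3\in(-1,0)$ to bound $\alpha_3+\alpha_4<-4$; for $n>10$ it rules out three negative roots via the cubic Vieta identity; and $n=10$ is checked numerically because the sign argument degenerates there. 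Your observation that the sum $S$ of \emph{all} negative roots satisfies $S\leq\alpha_4<-4$ no matter how many negative roots there are makes all of that case analysis, and the separate $n=10$ computation, unnecessary. The one place to be slightly careful is the sentence asserting that the spectrum is exactly $-2$ with multiplicity $n-1$ together with the four roots of $p$: this needs the check $p(-2)\neq 0$ (which you do first) plus the dimension count $(n-1)+4=n+3$, which is the same argument the paper itself uses in Theorem \ref{mainthm}, so it is not a gap.
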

\begin{proof}
Note that $E_{\varepsilon}(K_{3,n})=4(3+n-2)=4n+4$.  Next, we show that  $E_{\varepsilon}(K_{3,n}-e)=2n-2+\abs{\alpha_1}+\abs{\alpha_2}+\abs{\alpha_3}+\abs{\alpha_4}$, where $\alpha_1\geq \alpha_2 \geq \alpha_3\geq \alpha_4$ are eigenvalues of the quotient matrix, with $m=3$, corresponding to the equitable partition discussed before Lemma \ref{lem2}.  The characteristic equation of the quotient matrix $Q_\pi$ simplifies to
\begin{equation}\label{eqnfor3}
p(x)=x^4+(2-2n)x^3-21x^2+(42n-58)x+4(10-n) =0.
\end{equation}
Since $\alpha_1, \alpha_2, \alpha_3, \alpha_4$ are the roots of the equation (\ref{eqnfor3}), we have
\begin{equation}\label{eqnfor3,1}
\alpha_1+\alpha_2+\alpha_3+\alpha_4=2n-2
\end{equation}
\begin{equation}
\alpha_1 \alpha_2+\alpha_1\alpha_3+\alpha_1 \alpha_4+ \alpha_2\alpha_3+\alpha_2 \alpha_4+\alpha_3\alpha_4= -21
\end{equation}
\begin{equation}\label{eqnfor42}
\alpha_1 \alpha_2\alpha_3+\alpha_1 \alpha_2\alpha_4+ \alpha_1\alpha_3 \alpha_4+\alpha_2\alpha_3\alpha_4=-(42n-58)
\end{equation}
\begin{equation}\label{eqnfor3,4}
\alpha_1 \alpha_2\alpha_3\alpha_4=4(10-n).
\end{equation}

\textbf{Case 1}: Let $n<10$. Then $\alpha_1 \alpha_2\alpha_3\alpha_4=4(10-n)>0$, and hence $p(x)$ has either $0,2$ or $4$ positive roots. As $n \geq 3$, $\alpha_1+\alpha_2+\alpha_3+\alpha_4 = 2n-2>0$ and so all $\alpha_i$'s cannot be negative. Again $\alpha_1 \alpha_2+\alpha_1\alpha_3+\alpha_1 \alpha_4+ \alpha_2\alpha_3+\alpha_2 \alpha_4+\alpha_3\alpha_4 =-21<0$ implies that $\alpha_i$'s cannot be all positive. Thus $p(x)$ must have exactly two positive and two negative roots. Assume that $\alpha_4 \leq  \alpha_3<0<\alpha_2 \leq \alpha_1$.

Now $p(-5)=36(n+5)>0$ and $p(-4)=-4(11n-16)<0$. Thus $-5<\alpha_4<-4$. As $p(-1)=-4(11n-19)<0$ and $p(0)=4(10-n)>0$, we have $-1<\alpha_3<0$. Consequently,
$$\alpha_3+\alpha_4<(-4)+0=-4.$$
Thus all the roots of (\ref{eqnfor3}) are different from $-2$, and hence we have the following: 
\begin{eqnarray*}
E_{\varepsilon}(K_{3,n}-e) &=&2n-2+\abs{\alpha_1}+\abs{\alpha_2}+\abs{\alpha_3}+\abs{\alpha_4}\\
&=& 2n-2+2n-2-2(\alpha_3+\alpha_4)\\
&=& 4n-4-2(\alpha_3+\alpha_4)\\
&>& 4n+4=E_{\varepsilon}(K_{3,n}).
\end{eqnarray*}

\textbf{Case 2}: Let $n>10$. Then $\alpha_1 \alpha_2\alpha_3\alpha_4=4(10-n)<0$,  and hence $p(x)$ has either one or three negative roots.  Suppose that $p(x)$ has three negative roots, say $\alpha_2, \alpha_3$ and $\alpha_4$. From (\ref{eqnfor3,1}), we have
\begin{align*}
 \alpha_1+\alpha_2=2(n-1)-\alpha_3-\alpha_4>0, \qquad as \quad \alpha_3,\alpha_4<0.
\end{align*}

Since $\alpha_2, \alpha_3, \alpha_4<0$ and $\alpha_1+\alpha_2>0$, therefore $\alpha_1 \alpha_2\alpha_3+\alpha_1 \alpha_2\alpha_4+ \alpha_1\alpha_3 \alpha_4+\alpha_2\alpha_3\alpha_4=\alpha_1 \alpha_2\alpha_3+\alpha_1 \alpha_2\alpha_4+ (\alpha_1+\alpha_2)\alpha_3\alpha_4 >0$. But from (\ref{eqnfor42}), we have $\alpha_1 \alpha_2\alpha_3+\alpha_1 \alpha_2\alpha_4+ \alpha_1\alpha_3 \alpha_4+\alpha_2\alpha_3\alpha_4=-(42n-58)<0$. Hence $p(x)$ has exactly one negative root:
$$\alpha_4<0<\alpha_3 \leq \alpha_2\leq \alpha_1.$$

Now $p(-5)=36(n+5)>0$ and $p(-4)=-4(11n-16)<0$ and hence $-5<\alpha_4<-4$. 
Thus all the roots of (\ref{eqnfor3}) are different from $-2$, and hence we have the following: 
\begin{eqnarray*}
E_{\varepsilon}(K_{3,n}-e) &=&2n-2+\abs{\alpha_1}+\abs{\alpha_2}+\abs{\alpha_3}+\abs{\alpha_4}\\
&=& 2n-2+2n-2-2\alpha_4\\
&=& 4n-4-2\alpha_4\\
&>& 4n+4=E_{\varepsilon}(K_{3,n}).
 \end{eqnarray*}

\textbf{Case 3:} For $n=10$, we have $E_{\varepsilon}(K_{3,10})=44<45.0087=E_{\varepsilon}(K_{3,10}-e)$. Thus for $n\geq 3$ and any edge $e$, we have
$$E_{\varepsilon}(K_{3,n})<E_{\varepsilon}(K_{3,n}-e).$$
\end{proof}

In the next crucial lemma, we show that the  quotient matrix $Q_\pi$ has exactly one negative eigenvalue for $n \geq 5$ and $m \geq 4$.
\begin{lem}\label{lem4}
If $m\geq 4$ and $n\geq 5$, then $p(x)$ (defined in (\ref{eqn1})) has exactly three positive and one negative roots, that is, $\alpha_4<0<\alpha_3 \leq \alpha_2\leq \alpha_1$. Moreover, $-5<\alpha_4<-4$.
\end{lem}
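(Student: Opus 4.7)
The strategy is an exact parallel of Cases~1 and 2 of Lemma~\ref{lem3}: extract Vieta's identities from (\ref{eqn1}), rule out the possibilities of zero or three negative roots by sign analysis, and then apply the intermediate value theorem at $x=-5$ and $x=-4$ to localise the unique negative root.

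First, I would write out the four Vieta relations coming from $p(x)$:
\[\sum_i\alpha_i=2(m+n-4),\quad \sum_{i<j}\alpha_i\alpha_j=4mn-12(m+n)+15,\]
\[\sum_{i<j<k}\alpha_i\alpha_j\alpha_k=-\bigl[16mn-6(m+n)-40\bigr],\quad \alpha_1\alpha_2\alpha_3\alpha_4=-4\bigl[5mn-14(m+n)+32\bigr].\]
For $m\geq 4,\,n\geq 5$, both $5mn-14(m+n)+32$ and $16mn-6(m+n)-40$ are strictly positive: each is non-decreasing in $m$ and $n$ separately (the partial derivatives $5n-14,\,5m-14$ and $16n-6,\,16m-6$ are all positive in the stated range), and at $m=4,\,n=5$ they take the values $6$ and $226$. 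In particular the product of the four roots is negative, so $p$ has either one or three negative roots.

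To exclude three negative roots, I would mimic the factorisation trick from Case~2 of Lemma~\ref{lem3}. Assuming for contradiction that $\alpha_2,\alpha_3,\alpha_4<0<\alpha_1$, the first Vieta identity forces $\alpha_1+\alpha_2=2(m+n-4)-(\alpha_3+\alpha_4)>0$, and the sum of triple products can be rewritten as
\[\alpha_1\alpha_2(\alpha_3+\alpha_4)+(\alpha_1+\alpha_2)\,\alpha_3\alpha_4,\]
which is a sum of two strictly positive quantities (the first because $\alpha_1\alpha_2<0$ and $\alpha_3+\alpha_4<0$; the second because both factors are positive). This contradicts the Vieta value $-[16mn-6(m+n)-40]<0$. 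Hence $p$ has exactly one negative root, which we label $\alpha_4$.

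For the localisation, I would evaluate $p$ at the two endpoints. Collecting terms by the monomials $mn$, $m+n$ and $1$, the $mn$-contribution in $p(-5)$ cancels and one obtains the clean identities
\[p(-5)=36(m+n+2),\qquad p(-4)=-4\bigl[5mn-4(m+n)-4\bigr].\]
The first is clearly positive, and for $m\geq 4,\,n\geq 5$ the bracket in $p(-4)$ is also positive (again non-decreasing in each variable, with value $60$ at $m=4,\,n=5$), so $p(-4)<0$. The intermediate value theorem then produces a root of $p$ in $(-5,-4)$, and by uniqueness this root must be $\alpha_4$.

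The main obstacle is purely organisational: evaluating $p(-5)$ and $p(-4)$ requires carefully collecting five terms each, but the cancellations are clean (the $mn$-term disappears in $p(-5)$ and the remaining bracket in $p(-4)$ factors as shown). Once these two closed forms are in hand, the remainder of the argument is a direct copy of the reasoning in Lemma~\ref{lem3}, Case~2.
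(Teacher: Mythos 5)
Your proposal is correct and follows essentially the same route as the paper: the sign of the product of roots rules out zero or two negative roots, the regrouping of the triple-product sum (using $\alpha_1+\alpha_2>0$) eliminates the three-negative-roots case, and the evaluations $p(-5)=36(m+n+2)>0$, $p(-4)=-4[5mn-4(m+n)-4]<0$ locate $\alpha_4$ in $(-5,-4)$. The only cosmetic difference is that you certify positivity of the relevant brackets by monotonicity plus evaluation at $(m,n)=(4,5)$, whereas the paper exhibits explicit algebraic factorizations; both verifications are valid.
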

\begin{proof}
Let $\alpha_1, \alpha_2, \alpha_3, \alpha_4$ be the roots of the polynomial
\begin{eqnarray*}
p(x) &=& x^4 -2(m+n-4)x^3+(4mn-12(m+n)+15)x^2 +(16mn-6(m+n)-40)x\\
 & & - 4(5mn-14(m+n)+32).
\end{eqnarray*}
Then
\begin{equation}\label{eqnform,1}
 \alpha_1+\alpha_2+\alpha_3+\alpha_4=2(m+n-4)
\end{equation}
\begin{equation}
\alpha_1 \alpha_2+\alpha_1\alpha_3+\alpha_1 \alpha_4+ \alpha_2\alpha_3+\alpha_2 \alpha_4+\alpha_3\alpha_4=4mn-12(m+n)+15
\end{equation}
\begin{equation}
\alpha_1 \alpha_2\alpha_3+\alpha_1 \alpha_2\alpha_4+ \alpha_1\alpha_3 \alpha_4+\alpha_2\alpha_3\alpha_4=-(16mn-6(m+n)-40)
\end{equation}
\begin{equation}\label{eqnform,4}
\alpha_1 \alpha_2\alpha_3\alpha_4=-4(5mn-14(m+n)+32).
\end{equation}

Now $\alpha_1 \alpha_2 \alpha_3 \alpha_4 = - 4[5mn-14(m+n)+32]=-4[9(m-2)(n-2)-4(m-1)(n-1)]<0$. Thus $p(x)$ has either one or three negative roots. Suppose that $p(x)$ has three negative roots, say $\alpha_2, \alpha_3$ and $\alpha_4$. From (\ref{eqnform,1}), we have
\begin{align*}
 \alpha_1+\alpha_2=2(m+n-4)-\alpha_3-\alpha_4>0, \qquad as \quad \alpha_3,\alpha_4<0.
\end{align*}

Since $\alpha_2, \alpha_3, \alpha_4<0$ and $\alpha_1+\alpha_2>0$, we have $\alpha_1 \alpha_2\alpha_3+\alpha_1 \alpha_2\alpha_4+ \alpha_1\alpha_3 \alpha_4+\alpha_2\alpha_3\alpha_4=\alpha_1 \alpha_2\alpha_3+\alpha_1 \alpha_2\alpha_4+ (\alpha_1+\alpha_2)\alpha_3\alpha_4 >0$. But $\alpha_1 \alpha_2\alpha_3+\alpha_1 \alpha_2\alpha_4+ \alpha_1\alpha_3 \alpha_4+\alpha_2\alpha_3\alpha_4=-(16mn-6(m+n)-40)=-2[(m-4)(n-5)+7n(m-1)+2m]<0$. Hence $p(x)$ has exactly one negative root.
$$\alpha_4<0<\alpha_3 \leq \alpha_2\leq \alpha_1.$$

Moreover, $p(-5)=36(m+n)+72>0$ and $p(-4)=-4[5mn-4(m+n)-4]=-4[4n(m-1)+m(n-4)-4]<0$, and hence $-5<\alpha_4<-4$.
\end{proof}

Using the previous lemmas, next we show that the  $-2$ as an $\varepsilon$-eigenvalue of $K_{m,n}-e$ has multiplicity $m+n-4$.
\begin{thm}\label{mainthm}
Let $K_{m,n}$ be a complete bipartite graph with $m, n \geq 2$. Then for any edge $e$, the $\varepsilon$-eigenvalues of $K_{m,n}-e$ are $-2$ with multiplicity $m+n-4$, and the roots of the polynomial
$$p(x) = x^4-2(m+n-4)x^3 + [4mn-12(m+n) + 15]x^2 + [16mn - 6(m+n) -40]x - 4[5mn-14(m+n)+32].$$
\end{thm}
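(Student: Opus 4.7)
The strategy is a dimension count: combine the $m+n-4$ eigenvectors for $-2$ produced by Lemma \ref{lem1} with the four eigenvalues of the quotient matrix $Q_\pi$, and verify that these $(m+n-4)+4 = m+n$ values exhaust the spectrum of the symmetric matrix $\varepsilon(K_{m,n}-e)$.

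First I would apply Lemma \ref{lem1} to obtain $m+n-4$ linearly independent eigenvectors for the eigenvalue $-2$. Then by Theorem \ref{quo-spec}, since $\pi = \{P_1, P_2, P_3, P_4\}$ is an equitable partition of $\varepsilon(K_{m,n}-e)$ with quotient matrix $Q_\pi$, all four roots of $p(x)$ (counted with multiplicity) appear in the $\varepsilon$-spectrum of $K_{m,n}-e$. Since $\varepsilon(K_{m,n}-e)$ is of order $m+n$, these $m+n-4$ copies of $-2$ together with the four roots of $p(x)$ would exhaust the spectrum --- provided no root of $p(x)$ coincides with $-2$, in which case the multiplicity of $-2$ could strictly exceed $m+n-4$ and the counting collapses.

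The decisive step is therefore the verification that $p(-2) \neq 0$. A direct substitution into the expression for $p(x)$ collects as
\[
p(-2) = -36(m-1)(n-1),
\]
which is strictly negative for $m, n \geq 2$. Hence $-2$ is not a root of $p(x)$, the four roots of $p(x)$ are genuinely distinct from $-2$, and the multiplicity of $-2$ as an $\varepsilon$-eigenvalue of $K_{m,n}-e$ is exactly $m+n-4$, with the remaining four eigenvalues being precisely the roots of $p(x)$.

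The main obstacle, and the real content of the argument, is ruling out an inflated multiplicity for $-2$: Lemma \ref{lem1} supplies only a lower bound on that multiplicity, and Theorem \ref{quo-spec} supplies only a lower bound on the multiplicities of the roots of $p(x)$. Both lower bounds become tight simultaneously --- and the full $\varepsilon$-spectrum of $K_{m,n}-e$ is pinned down --- precisely because the single identity $p(-2) = -36(m-1)(n-1) \neq 0$ holds for all admissible $m,n$.
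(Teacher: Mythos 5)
Your proposal is correct, and its overall architecture (Lemma \ref{lem1} for the $-2$-eigenvectors, Theorem \ref{quo-spec} for the four roots of $p(x)$, then a dimension count on the $(m+n)\times(m+n)$ symmetric matrix) matches the paper's. Where you diverge is in the one step that carries the real content: showing $-2$ is not a root of $p(x)$. The paper deduces this indirectly by citing Lemmas \ref{lem2}, \ref{lem3} and \ref{lem4}, whose sign analyses and intermediate-value arguments locate every negative root of $p(x)$ in $(-5,-4)\cup(-1,0)$, intervals avoiding $-2$; those lemmas are stated under case hypotheses ($m=2$; $m=3$; $m\geq 4,\ n\geq 5$), so the paper's citation in this theorem implicitly leans on symmetry in $m,n$ and on the separately computed $K_{4,4}$ instance to cover all $m,n\geq 2$. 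You instead evaluate $p(-2)$ directly, and your computation checks out: the $mn$-coefficient is $16-32-20=-36$, the $(m+n)$-coefficient is $16-48+12+56=36$, and the constant is $16-64+60+80-128=-36$, giving $p(-2)=-36(m-1)(n-1)<0$ for all $m,n\geq 2$. This is more elementary, handles every case uniformly in one line (including $m=n=4$, which sits outside the hypotheses of all three lemmas), and cleanly decouples the spectrum determination from the root-location work; the trade-off is that the paper's route gets the root locations as a by-product, which it still needs anyway for the energy comparison in Theorem \ref{main2}, so your shortcut streamlines this theorem without replacing those lemmas elsewhere.
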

\begin{proof}
From Lemma \ref{lem2}, Lemma \ref{lem3} and Lemma \ref{lem4}, we have $-2$ can not be a root of the polynomial $p(x)$, for all $m,n \geq 2$. Since the roots of the polynomial $p(x)$ are in the $\varepsilon$-spectrum of $K_{m,n}-e$, and all the roots of $p(x)$ are different from $-2$, therefore the roots of  $p(x)$ are the remaining $\varepsilon$-eigenvalues of $K_{m,n}-e$. 
\end{proof}

In the next theorem, we establish that the eccentricity energy of a complete bipartite graph $K_{m,n}$ always increases due to  deletion of any edge.

\begin{thm}\label{main2}
Let $K_{m,n}$ be the complete bipartite graph with $m,n\geq 2$. Then for any edge $e$ of $K_{m,n}$,
$$E_{\varepsilon}(K_{m,n})<E_{\varepsilon}(K_{m,n}-e).$$
\end{thm}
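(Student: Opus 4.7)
The plan is to combine Lemmas \ref{lem2}, \ref{lem3}, \ref{lem4} and Theorem \ref{mainthm} through a clean case split. Since $K_{m,n} \cong K_{n,m}$, I would assume without loss of generality that $m \leq n$, reducing the problem to the cases $m \in \{2,3\}$ (handled immediately by Lemma \ref{lem2} and Lemma \ref{lem3}) and $m, n \geq 4$.

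For the range $m, n \geq 4$, Theorem \ref{mainthm} identifies the $\varepsilon$-spectrum of $K_{m,n}-e$ as $-2$ with multiplicity $m+n-4$ together with the four roots $\alpha_1, \alpha_2, \alpha_3, \alpha_4$ of $p(x)$, so
\[
E_{\varepsilon}(K_{m,n}-e) \;=\; 2(m+n-4) + \sum_{i=1}^{4}|\alpha_i|.
\]
Whenever $m \geq 4$ and $n \geq 5$ (which, by the $m \leftrightarrow n$ symmetry in $p(x)$, covers all of $m, n \geq 4$ except $(m,n) = (4,4)$), Lemma \ref{lem4} provides a unique negative root $\alpha_4 \in (-5, -4)$; combining this with the Vieta identity $\alpha_1 + \alpha_2 + \alpha_3 + \alpha_4 = 2(m+n-4)$ yields $\sum_{i=1}^4 |\alpha_i| = 2(m+n-4) - 2\alpha_4 > 2(m+n-4) + 8$, and hence
\[
E_{\varepsilon}(K_{m,n}-e) \;>\; 4(m+n-4) + 8 \;=\; 4(m+n-2) \;=\; E_{\varepsilon}(K_{m,n}),
\]
exactly mirroring the concluding estimate in Case 2 of Lemma \ref{lem3}.

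The only case not covered by this uniform argument is $m = n = 4$, and this is the small obstacle I anticipate: here the constant term $-4[5mn - 14(m+n)+32]$ of $p(x)$ vanishes, so the product-of-roots sign argument powering Lemma \ref{lem4} collapses. I would dispatch $(4,4)$ by an explicit factorisation of $p(x)$, which should split into a product of lower-degree factors with readily computable roots; a direct evaluation of $E_{\varepsilon}(K_{4,4}-e)$ and comparison with $E_{\varepsilon}(K_{4,4}) = 24$ then closes the proof. Apart from this one-off verification, the argument is essentially bookkeeping on top of the earlier lemmas.
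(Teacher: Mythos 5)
Your proposal is correct and follows essentially the same route as the paper: reduce to $m\le n$, dispatch $m=2$ and $m=3$ via Lemmas \ref{lem2} and \ref{lem3}, use Theorem \ref{mainthm} together with the root $\alpha_4\in(-5,-4)$ from Lemma \ref{lem4} for $m\ge 4$, $n\ge 5$, and verify $(m,n)=(4,4)$ directly. Your observation that the constant term of $p(x)$ vanishes at $(4,4)$ (so $p$ factors with a zero root there) is a nice touch the paper glosses over with a numerical check, but it does not change the argument.
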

\begin{proof}
Without loss of generality, let us assume that $n\geq m \geq 2$. Let us consider the following cases:

\noindent\textbf{Case 1:} Let $m=2$. For $n\geq 2$, by Lemma \ref{lem2}, we have
$$E_{\varepsilon}(K_{2,n})<E_{\varepsilon}(K_{2,n}-e).$$

\noindent\textbf{Case 2:} Let $m=3$. For $n\geq $3, by Lemma \ref{lem3}, we have
$$E_{\varepsilon}(K_{3,n})<E_{\varepsilon}(K_{3,n}-e).$$

\noindent\textbf{Case 3:} Let $m\geq 4$. For $m=n=4$, by direct calculation we have
$$E_{\varepsilon}(K_{4,4})=24<24.84886=E_{\varepsilon}(K_{4,4}-e).$$
Let  $n\geq 5$. By Theorem \ref{mainthm}, the $\varepsilon$-eigenvalues of $K_{m,n}-e$ are $-2$ with multiplicity $m+n-4$, and the roots $\alpha_4\leq \alpha_3 \leq \alpha_2\leq \alpha_1$ of the polynomial $$p(x) = x^4-2(m+n-4)x^3 + [4mn-12(m+n) + 15]x^2 + [16mn - 6(m+n) -40]x - 4[5mn-14(m+n)+32].$$
Since $m\geq 4, n\geq 5$, by Lemma \ref{lem4}, we have $\alpha_4<-4$. Therefore,
\begin{eqnarray*}
E_{\varepsilon}(K_{m,n}-e) &=&2(m+n-4)+\abs{\alpha_1}+\abs{\alpha_2}+\abs{\alpha_3}+\abs{\alpha_4}\\
&=&2(m+n-4)+2(m+n-4)-2\alpha_4\\
&=& 4m+4n-16-2\alpha_4\\
&>& 4m+4n-8=E_{\varepsilon}(K_{m,n}).
\end{eqnarray*}
This completes the proof.
\end{proof}
Next, using Theorem \ref{main2}, we prove that the eccentricity energy of a complete $k$-partite graph increases when an edge deleted.

\begin{thm}\label{main3}
Let $K_{n_1,n_2,\hdots,n_k}$ be the complete $k$-partite graph such that $\sum_{i=1}^k n_i=n$ and $n_i\geq 2$ for $i=1,2,\hdots,k$. Then for any edge $e$ of $K_{n_1,n_2,\hdots,n_k}$,
$$E_{\varepsilon}(K_{n_1,n_2,\hdots,n_k})<E_{\varepsilon}(K_{n_1,n_2,\hdots,n_k}-e).$$
\end{thm}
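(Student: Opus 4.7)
When $k=2$ the statement is exactly Theorem \ref{main2}. Assume $k \geq 3$ and, without loss of generality, $e = u_1 v_1$ with $u_1 \in V_1$, $v_1 \in V_2$. The key structural observation is that because $k \geq 3$ any third part $V_3$ supplies a length-$2$ detour, so $d_{G-e}(u_1,v_1)=2$, every eccentricity in $G-e$ equals $2$, and $\varepsilon(G-e)$ turns out to be block diagonal: a block $M$ on $V_1 \cup V_2$ (with entries $2$ between distinct vertices in the same part and at the two positions $(u_1,v_1)$ and $(v_1,u_1)$, zeros elsewhere), together with the blocks $2(J_{n_l}-I_{n_l})$ for $l=3,\ldots,k$. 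The latter contribute the same amount to both $E_\varepsilon(G)$ and $E_\varepsilon(G-e)$, so the theorem reduces to proving $E_\varepsilon(M) > E_\varepsilon(K_{n_1,n_2}) = 4(n_1+n_2-2)$.

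I would analyse $M$ by the same equitable partition used before Lemma \ref{lem2}, namely $\{u_1\},\,V_1\setminus\{u_1\},\,\{v_1\},\,V_2\setminus\{v_1\}$. The eigenvectors constructed in Lemma \ref{lem1} still apply, so $-2$ is an eigenvalue of $M$ with multiplicity $n_1+n_2-4$, and the remaining four eigenvalues are the roots of the characteristic polynomial $\widetilde p(x)$ of the $4\times 4$ quotient. This quotient differs from $Q_\pi$ in the excerpt only in the $(1,3)$ and $(3,1)$ entries, which drop from $3$ to $2$ because $d_{G-e}(u_1,v_1)$ is now $2$. A Schur-complement calculation, after permuting the four classes so that the bottom-left $2\times 2$ block becomes $2I$ (and hence commutes with the diagonal block), gives the clean factorisation
\[
\widetilde p(x) = (x+2)^2(x-2(n_1-1))(x-2(n_2-1)) - 4(x-2(n_1-2))(x-2(n_2-2)).
\]
One then checks $\widetilde p(-4)=16(n_1+n_2+1)>0$, $\widetilde p(-2)=-16(n_1-1)(n_2-1)<0$, and $\widetilde p(0)=16(n_1+n_2-3)>0$. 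Combined with the Vieta/positivity reasoning used in Lemmas \ref{lem2}--\ref{lem4}, this forces $\widetilde p$ to have exactly two positive roots $\alpha_1,\alpha_2$ and two negative roots $\alpha_3\in(-2,0)$, $\alpha_4\in(-4,-2)$, none of which equal $-2$.

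The trace relation $\sum_i \alpha_i = 2(n_1+n_2-4)$ then yields $E_\varepsilon(M) = 4(n_1+n_2-4) - 2(\alpha_3+\alpha_4)$, so the desired inequality $E_\varepsilon(M) > 4(n_1+n_2-2)$ is equivalent to the crisp statement $\alpha_3+\alpha_4 < -4$. This will be the main obstacle: a Weyl-type perturbation comparison with $\varepsilon(K_{n_1,n_2})$ only places $\alpha_3,\alpha_4$ within distance $2$ of $-2$, which is not sharp enough. My plan is to substitute $z=(x+2)/2$, which turns $\widetilde p$ into $16\,g(z)$ with
\[
g(z) = z^2(z-n_1)(z-n_2) - (z-n_1+1)(z-n_2+1);
\]
its roots are $\delta_i=(\alpha_i+2)/2$, with $\delta_3\in(0,1)$ and $\delta_4\in(-1,0)$, and the goal becomes $\delta_3+\delta_4<0$. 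The symmetry identity $g(z)-g(-z) = -2z\,[(n_1+n_2)z^2-(n_1+n_2-2)]$ pinpoints the crossover $c = \sqrt{(n_1+n_2-2)/(n_1+n_2)}\in(0,1)$. A short direct computation collapses $g(c)$ to a positive multiple of $(n_1+n_2)^3-(n_1+n_2)^2-2(n_1+n_2)(n_1n_2+1)+4$, which I would bound from below using $n_1n_2 \leq (n_1+n_2)^2/4$ (AM--GM) and $n_1+n_2\geq 4$ to conclude $g(c)>0$. This pins $\delta_3\in(0,c)$, so $g(-\delta_3) < g(\delta_3) = 0$; combined with $g(-1)=n_1+n_2+1>0$ and the intermediate value theorem, the unique root $\delta_4\in(-1,0)$ must satisfy $\delta_4 < -\delta_3$, completing the proof.
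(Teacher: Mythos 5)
Your proposal is correct, and it departs from the paper's proof in a way that actually matters. The paper reduces Theorem \ref{main3} to Theorem \ref{main2} by asserting that $\varepsilon(K_{n_1,\ldots,n_k}-e)$ is block diagonal with top-left block $\varepsilon(K_{n_1,n_2}-e)$; for $k\geq 3$ this identification fails. As you observe, a vertex of any third part gives a length-two detour between the endpoints of the deleted edge, every eccentricity of $K_{n_1,\ldots,n_k}-e$ equals $2$, and so the entry of $\varepsilon(K_{n_1,\ldots,n_k}-e)$ at the deleted pair is $2$, whereas the corresponding entry of $\varepsilon(K_{n_1,n_2}-e)$ is $3$ (those two endpoints have eccentricity $3$ in the bipartite graph). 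Hence the correct block is your matrix $M$, Theorem \ref{main2} cannot simply be quoted, and the genuinely new inequality $E_\varepsilon(M)>4(n_1+n_2-2)$ must be proved. Your route does exactly that, and the computations check out: the quotient of $M$ under the partition $\{v_1\},\,V_1\setminus\{v_1\},\,\{u_1\},\,V_2\setminus\{u_1\}$ has characteristic polynomial $\widetilde p(x)=(x+2)^2\bigl(x-2(n_1-1)\bigr)\bigl(x-2(n_2-1)\bigr)-4\bigl(x-2(n_1-2)\bigr)\bigl(x-2(n_2-2)\bigr)$; the values $\widetilde p(-4)=16(n_1+n_2+1)$, $\widetilde p(-2)=-16(n_1-1)(n_2-1)$ and $\widetilde p(0)=16(n_1+n_2-3)$ are as you state; the sign of the product and the nonnegativity of the sum of the roots force exactly two negative roots, one in $(-4,-2)$ and one in $(-2,0)$; and with $s=n_1+n_2$, $q=n_1n_2$ one finds $g(c)=\bigl(s^3-s^2-2s(q+1)+4\bigr)/s^2>0$ by $q\leq s^2/4$ and $s\geq 4$, after which the symmetry identity yields $\delta_4<-\delta_3$, i.e.\ $\alpha_3+\alpha_4<-4$, which is precisely what is needed. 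In short, your argument is not merely an alternative: it supplies the analysis that the paper's reduction skips over, at the cost of redoing the quartic root-location work of Lemmas \ref{lem2}--\ref{lem4} for the modified quotient matrix (for $k=2$, of course, Theorem \ref{main2} applies verbatim, as you note).
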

\begin{proof}
Let $V_1,V_2,\hdots,V_k$ be the $k$ partition of  the vertex set of $K_{n_1,n_2,\hdots,n_k}$, where
$V_1=\{v_1,v_2,\hdots,v_{n_1}\}$ and $V_i=\Big \{v_{1+\sum_{j=1}^{i-1}n_j},v_{2+\sum_{j=1}^{i-1}n_j},\hdots,v_{\sum_{j=1}^{i}n_j}\Big \}$ for $i=2,3,\hdots,k$. Let $e$ be any edge in $K_{n_1,\hdots,n_k}$. Without loss of generality, we assume that $e=v_1v_{n+1}$. Then,
\begin{align*}
\varepsilon(K_{n_1,\hdots,n_k})=
	\left[ {\begin{array}{cc}
	\varepsilon(K_{n_1,n_2}) & 0\\
	0 & B\\
	\end{array} } \right]\quad and \quad
\varepsilon(K_{n_1,\hdots,n_k}-e)=
	\left[ {\begin{array}{cc}
	\varepsilon(K_{n_1,n_2}-e) & 0\\
	0 & B\\
	\end{array} } \right],
\end{align*}
where		
	\[B=\left[ {\begin{array}{cccc}
2(J_{n_3}-I_{n_3}) & 0 & \hdots  & 0 \\
0 & 2(J_{n_4}-I_{n_4}) & \hdots  & 0 \\
\vdots & \vdots & \ddots & \vdots \\
0 & 0& \hdots & 2(J_{n_k}-I_{n_k}) \\
\end{array} } \right].\]
By using Theorem \ref{main2}, we have
\begin{eqnarray*}
E_{\varepsilon}(K_{n_1,\hdots,n_k})&=& E_{\varepsilon}(K_{n_1,n_2})+E_{\varepsilon}(B)\\
&<& E_{\varepsilon}(K_{n_1,n_2}-e)+E_{\varepsilon}(B)\\
&=& E_{\varepsilon}(K_{n_1,\hdots,n_k}-e).
\end{eqnarray*}
\end{proof}

\textbf{Acknowledgement:} We would like to thank  Department of Science and Technology, India, for financial support through the projects Early Carrier Research Award (ECR/2017/000643) and MATRICS (MTR/2018/000986).

\bibliographystyle{amsplain}
\bibliography{ecc-ref}

\providecommand{\bysame}{\leavevmode\hbox to3em{\hrulefill}\thinspace}
\providecommand{\MR}{\relax\ifhmode\unskip\space\fi MR }
\providecommand{\MRhref}[2]{%
  \href{http://www.ams.org/mathscinet-getitem?mr=#1}{#2}
}
\providecommand{\href}[2]{#2}
\begin{thebibliography}{10}

\bibitem{akbari2009edge}
Saieed Akbari, Ebrahim Ghorbani, and Mohammad~Reza Oboudi, \emph{Edge addition,
  singular values, and energy of graphs and matrices}, Linear Algebra Appl.
  \textbf{430} (2009), no.~8-9, 2192--2199. \MR{2503964}

\bibitem{brouwer2011spectra}
Andries~E. Brouwer and Willem~H. Haemers, \emph{Spectra of graphs},
  Universitext, Springer, New York, 2012. \MR{2882891}

\bibitem{day2008graph}
Jane Day and Wasin So, \emph{Graph energy change due to edge deletion}, Linear
  Algebra Appl. \textbf{428} (2008), no.~8-9, 2070--2078. \MR{2401641}

\bibitem{mahato2019spectral}
Iswar Mahato, R~Gurusamy, M~Rajesh Kannan, and S~Arockiaraj, \emph{On the
  spectral radius and the energy of eccentricity matrix of a graph}, arXiv
  preprint arXiv:1909.05609 (2019).

\bibitem{mahato2020}
Iswar Mahato, R.~Gurusamy, M.~Rajesh Kannan, and S.~Arockiaraj, \emph{Spectra
  of eccentricity matrices of graphs}, Discrete Appl. Math. \textbf{285}
  (2020), 252--260. \MR{4110109}

\bibitem{ran1}
Milan Randi\'{c}, \emph{{${\rm D}_{\rm MAX}$}---matrix of dominant distances in
  a graph}, MATCH Commun. Math. Comput. Chem. \textbf{70} (2013), no.~1,
  221--238. \MR{3136762}

\bibitem{shan2017energy}
Hai-Ying Shan, Chang-Xiang He, and Zhen-Sheng Yu, \emph{The energy change of
  the complete multipartite graph}, Electron. J. Linear Algebra \textbf{36}
  (2020), 309--317. \MR{4108766}

\bibitem{sun611proof}
Shaowei Sun and Kinkar~Chandra Das, \emph{Corrigendum to ``{P}roof of a
  conjecture on distance energy change of complete multipartite graph due to
  edge deletion'' [{L}inear {A}lgebra {A}ppl. 611 (2021) 253--259]}, Linear
  Algebra Appl. \textbf{618} (2021), 203--204. \MR{4232693}

\bibitem{sun611proof-cor}
\bysame, \emph{Corrigendum to ``{P}roof of a conjecture on distance energy
  change of complete multipartite graph due to edge deletion'' [{L}inear
  {A}lgebra {A}ppl. 611 (2021) 253--259]}, Linear Algebra Appl. \textbf{618}
  (2021), 203--204. \MR{4232693}

\bibitem{tian2020}
Gui-Xian Tian, Yuan Li, and Shu-Yu Cui, \emph{The change of distance energy of
  some special complete multipartite graphs due to edge deletion}, Linear
  Algebra Appl. \textbf{584} (2020), 438--457. \MR{4014741}

\bibitem{varghese2018distance}
Anu Varghese, Wasin So, and A.~Vijayakumar, \emph{Distance energy change of
  complete bipartite graph due to edge deletion}, Linear Algebra Appl.
  \textbf{553} (2018), 211--222. \MR{3809376}

\bibitem{wang2019graph}
Jianfeng Wang, Lu~Lu, Milan Randi\'{c}, and Guozheng Li, \emph{Graph energy
  based on the eccentricity matrix}, Discrete Math. \textbf{342} (2019), no.~9,
  2636--2646. \MR{3962744}

\bibitem{ecc-main}
Jianfeng Wang, Mei Lu, Francesco Belardo, and Milan Randi\'{c}, \emph{The
  anti-adjacency matrix of a graph: {E}ccentricity matrix}, Discrete Appl.
  Math. \textbf{251} (2018), 299--309. \MR{3906706}

\bibitem{wang2020spectral}
Jianfeng Wang, Mei Lu, Lu~Lu, and Francesco Belardo, \emph{Spectral properties
  of the eccentricity matrix of graphs}, Discrete Appl. Math. \textbf{279}
  (2020), 168--177. \MR{4092627}

\bibitem{wang2015graph}
Wen-Huan Wang and Wasin So, \emph{Graph energy change due to any single edge
  deletion}, Electron. J. Linear Algebra \textbf{29} (2015), 59--73.
  \MR{3414585}

\bibitem{zhou2011distance}
Bo~Zhou and Aleksandar Ili\'{c}, \emph{On distance spectral radius and distance
  energy of graphs}, MATCH Commun. Math. Comput. Chem. \textbf{64} (2010),
  no.~1, 261--280. \MR{2677587}

\end{thebibliography}

\end{document}